\documentclass[12pt]{amsart}

\usepackage{amsbsy,amsthm,amsfonts,amsmath,amssymb,eucal,amscd,graphics,enumerate, ifthen, indentfirst, stmaryrd,xspace,verbatim, epic, eepic,epsfig,verbatim}
\usepackage[all]{xypic}
\include{el_biblio.bib}


\newcommand{\iX}[1]{{\mathcal I}_{\mu_r}({\mathcal X})}


\def\clX{{\mathcal X}}
\def\clY{{\mathcal Y}}




\def\bix{{\overline{I}_{\mu}(\clX)}}
\def\biy{{\overline{I}_{\mu}(\clY)}}
\def\bix1{{\overline{I}_{\mu_{r_1}}(\clX)}}
\def\biy2{{\overline{I}_{\mu_{r_2}}(\clY)}}




\newtheorem{definition}{Definition}[section]
\newenvironment{defi}{\begin{definition} \rm}{\end{definition}}

\newtheorem{prop}[definition]{Proposition}

\newtheorem{coro}[definition]{Corollary}
\newtheorem{theo}[definition]{Theorem}
\newtheorem{remark}[definition]{Remark}


\newtheorem{remarkdef}[definition]{Remark-Definition}

\newcommand{\ex}[0]{\operatorname{Ext}}

\newcommand{\NS}[0]{\operatorname{NS}}
\newcommand{\T}[0]{\operatorname{T}}
\newcommand{\h}[0]{\operatorname{H}}
\newcommand{\FM}[0]{\operatorname{FM}}
\renewcommand{\dim}[0]{\operatorname{dim}}


\newtheorem{remarks}[definition]{Remarks}

\newtheorem{example}[definition]{Example}

\newtheorem{examples}[definition]{Examples}

\newtheorem{nothing}[definition]{$\!\!$}


\newtheorem{definition*}{Definition}[section]
\newenvironment{defi*}{\begin{definition*} \rm}{\end{definition*}}
\newtheorem{prop*}[definition*]{Proposition}
\newtheorem{lemm*}[definition*]{Lemma}
\newtheorem{coro*}[definition*]{Corollary}
\newtheorem{theo*}[definition*]{Theorem}
\newtheorem{remark*}[definition*]{Remark}
\newenvironment{rema*}{\begin{remark*} \rm}{\end{remark*}}
\newtheorem{remarks*}[definition*]{Remarks}
\newenvironment{remas*}{\begin{remarks*} \rm}{\end{remarks*}}
\newtheorem{example*}[definition*]{Example}
\newenvironment{exam*}{\begin{example*} \rm}{\end{example*}}
\newtheorem{examples*}[definition*]{Examples}\begin{large}                                             \end{large}
\newenvironment{exams*}{\begin{examples*} \rm}{\end{examples*}}



\renewcommand{\tilde}{\widetilde}

\title{On K3 FIBRATIONS: towards Mirror-Symmetry}
\pagestyle{headings}
\address{ Universitat, Departament de Matematiques, Edifici C, Facultad de Ciencies, 08193 Bellaterra, Barcelona}
\address{Dipartimento di Matematica "Guido Castelnuovo"
Sapienza Universit\`a di Roma
P.le Aldo Moro, 5 - 00185 Roma}
\email{martinez@mat.uniroma1.it}

\author[Cristina Mart{\'\i}nez]{Cristina Mart{\'\i}nez}
\email{cmartine@mat.uab.cat}

\begin{document}
\sloppy
\date{\today}

 \subjclass[2000]{Primary: 14D05; Secondary:
14D20}
\keywords{K3 surfaces, Fourier-Mukai transform, Mirror Symmetry}

\begin{abstract}
Given $X$ a K3 surface, a mirror dual to $X$ can be identified with a component of the moduli space of semistable sheaves on $X$. We consider  fibrations by $K3$ surfaces over a one dimensional base that are Calabi-Yau and we charaterize the dual fibration that turns to be derived equivalent to the original one relating the problem to mirror symmetry.
\vspace{0.5cm}


 

\end{abstract}
\maketitle

\section{Introduction}
The classical form of mirror symmetry considers mirror pairs of Calabi-Yau 3-folds $X$ and $\hat{X}$, and the symplectic geometry (Gromov-Witten invariants) of $X$ corresponds to the complex geometry (periods) of $\hat{X}$.

Let $X$ be a complex $K3$ surface 
and denote by $\NS(X), \T(X), \tilde{\h}(X,\mathbb{Z})$ the N\'eron-Severi lattice, the trascendental lattice and the Mukai lattice of $X$ respectively. 

Let $f:X\rightarrow C$ be a proper morphism of finite type with integral geometric fibres isomorphic to a polarized $K3$ surface over an algebraic curve $C$ of genus $g$. 
We prove the existence of a projective relative moduli space for stable sheaves on the fibers of $f$ that turns to be derived equivalent to the original fibration and can be considered as a dual fibration. 

Consider the fine moduli space $\mathcal{M}(r,e,s)$ parametrizing $e-$stable 
sheaves $E$ on $X$ such that 
$c_{0}(E)=rk(E)=r$, $c_ {1}(E)=e$ and $\chi(E)=r+s$. Here stability means Gieseker stability as considered in \cite{Sim}. The vector $v=(r,e,s)\in \tilde{\h}(X,\mathbf{Z})=\h^{0}(X,\mathbb{Z})\oplus \h^{1,1}(X,\mathbb{Z})\oplus \h^{4}(X,\mathbb{Z})$ is a class in the topological $K-$theory $\mathcal{K}_{top}(X)$ of the surface and it is called Mukai vector. Our main result is:

\begin{theo}\label{theo1}
 Given a non singular fibration $p:X\rightarrow C$ by $K3$ surfaces 
 with a polarization class $H$ of degree $d$,
 there exists a dual fibration which is derived equivalent to the original one and corresponds to a connected component of the relative
 moduli space $\mathcal{M}^{l}(X/C)$.  
\end{theo}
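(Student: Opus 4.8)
The plan is to realize the dual fibration as a connected component of a \emph{relative} moduli space of fiberwise Gieseker-stable sheaves carrying an isotropic Mukai vector, and to promote the classical two-dimensional correspondence to a relative Fourier--Mukai equivalence over $C$. I would begin by fixing a Mukai vector $v=(r,e,s)$ adapted to the polarization $H$ of degree $d$, chosen to be primitive and to satisfy $v^2=0$, together with a coprimality condition forcing $H$-stability and $H$-semistability to coincide on every fiber. For a single fiber $X_c=p^{-1}(c)$, a polarized $K3$ surface, Mukai's theorem then guarantees that the moduli space $\mathcal{M}(v)$ of $H$-stable sheaves with Mukai vector $v$ is again a smooth projective $K3$ surface, since $\dim\mathcal{M}(v)=v^2+2=2$.

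Next I would globalize this over the base. Applying Simpson's construction of moduli of stable sheaves, as in \cite{Sim}, in the relative setting over $C$ produces a projective morphism $\mathcal{M}^{l}(X/C)\to C$ whose fiber over $c\in C$ is the $K3$ surface $\mathcal{M}(v)$ attached to $X_c$. By this fiberwise description, the restriction $\hat{p}\colon\hat{X}\to C$ of a single connected component is itself a nonsingular fibration by $K3$ surfaces, the candidate dual fibration.

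For the derived equivalence I would use the universal object of the moduli problem. Assuming the relative moduli space is fine, a universal sheaf $\mathcal{P}$ exists on the fiber product $X\times_C\hat{X}$, and one defines the relative integral functor
\[
\Phi_{\mathcal{P}}\colon D^b(X)\longrightarrow D^b(\hat{X}),\qquad \Phi_{\mathcal{P}}(-)=Rq_*\bigl(p^*(-)\otimes\mathcal{P}\bigr),
\]
with $p,q$ the projections from $X\times_C\hat{X}$. To show $\Phi_{\mathcal{P}}$ is an equivalence I would invoke the criterion that a relative integral functor between two smooth projective $K3$-fibrations over $C$ is a relative equivalence precisely when its restriction to each closed fiber is an equivalence. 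Over a point $c$, the functor reduces to the classical Fourier--Mukai transform attached to the universal sheaf on a moduli space of sheaves on the $K3$ surface $X_c$, which Mukai and Bridgeland proved to be an equivalence exactly because $v^2=0$. Hence $\Phi_{\mathcal{P}}$ is a relative derived equivalence and, in particular, induces an equivalence $D^b(X)\simeq D^b(\hat{X})$ of the total spaces.

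The step I expect to be the main obstacle is the \emph{fineness} of the relative moduli space: a global universal sheaf need not exist, the obstruction lying in the relative Brauer group of $\hat{X}$. I would handle this either through the coprimality hypothesis on $v$, which annihilates the obstruction and yields an honest universal sheaf, or, failing that, by working with a twisted universal sheaf and proving a twisted relative equivalence $D^b(X)\simeq D^b(\hat{X},\alpha)$. A secondary difficulty is controlling flatness and cohomology-and-base-change along $\hat{p}$ so that $\mathcal{P}$ genuinely restricts to the fiberwise kernels; here the Calabi--Yau normalization of the total space is what matches the canonical bundles and confirms that $\hat{X}$ deserves to be called a mirror-type dual.
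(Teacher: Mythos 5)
Your proposal is correct in substance and reaches the same destination as the paper, but the route differs in how the dual fibration is assembled and how the equivalence is certified. The paper works bottom-up: for each point $t$ of the base it picks a two-dimensional Mukai moduli space $\mathcal{M}(r_t,e,s_t)$ on the fibre $X_t$, worries explicitly about which FM partner to choose (since the number of partners depends on the prime factorization of $2d$ and reflected Mukai vectors give isomorphic moduli spaces), and then extends the family of fibrewise universal sheaves $\mathcal{P}_t$ across the base by a monodromy-invariance argument via Deligne, before identifying the result with a component of $\mathcal{M}^l(X/C)$ using Bridgeland--Maciocia and concluding with C\u{a}ld\u{a}raru's twisted equivalence $D^b(M,\alpha)\cong D^b(X)$. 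You instead fix a single primitive isotropic Mukai vector $v$ once and for all, build the relative moduli space top-down by Simpson's construction, and verify the equivalence through a relative integral functor whose fibrewise restrictions are the classical Mukai transforms. This buys you two things: fixing $v$ globally sidesteps the paper's uniqueness-of-partner discussion entirely (the choice of $v$ \emph{is} the choice of component), and your observation that the coprimality $\gcd(r,s)=1$ kills the Brauer obstruction gives, in the favourable case, an honest untwisted universal sheaf and hence $D^b(X)\simeq D^b(\hat{X})$ rather than only a twisted equivalence. What you should still nail down, and what the paper leans on [BM] Prop.~3.4 for, is that the chosen connected component actually surjects onto $C$ with equidimensional fibres equal to the fibrewise moduli spaces; and you should cite a precise reference for the ``equivalence over every fibre implies relative equivalence'' criterion (Bridgeland--Maciocia's Theorem 1.2 in the $K3$-fibration case), since that is the one step of your argument that is a theorem rather than a formal reduction.
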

The moduli space $\mathcal{M}^{l}(X)$ of semistable sheaves on $X$
with respect to a fixed polarization $l$, in general has infinitely
many components, each of which is a quasi-projective scheme which
may be compactified by adding equivalence classes of semistable
sheaves. An irreducible component $Y\subset \mathcal{M}^{l}(X/C)$ is
said to be fine if $Y$ is projective and there exists a universal
family of stable sheaves, that is, and object of $D^{b}(X\times Y)$
inducing a derived equivalence.

\begin{coro}\label{finecomp}
There exists at least one  fine component of the relative moduli space or equivalently a sheaf $\mathcal{E}$
on a non singular fiber with fixed Mukai vector. 
\end{coro}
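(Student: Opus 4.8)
The plan is to read the corollary off Theorem~\ref{theo1} and then to unwind what the word \emph{fine} costs. By that theorem the dual fibration is derived equivalent to $p:X\rightarrow C$ and is realized as a connected component $Y\subset\mathcal{M}^{l}(X/C)$, the equivalence being induced by a kernel object $\mathcal{E}\in D^{b}(X\times_{C}Y)$. First I would recall the definition of a fine component given just above the statement: an irreducible $Y\subset\mathcal{M}^{l}(X/C)$ is fine exactly when it is projective and carries a universal family inducing a derived equivalence. The component produced by Theorem~\ref{theo1} is projective, since the dual fibration is proper over $C$ with $K3$ fibres, and by construction it induces a derived equivalence; hence it \emph{is} a fine component. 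This already establishes the first assertion.

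For the asserted equivalence of the two formulations I would argue by restriction and by spreading out. Restricting the universal kernel $\mathcal{E}$ to a single non-singular fibre $X_{c}=p^{-1}(c)$ yields a family of stable sheaves on $X_{c}$ parametrized by the corresponding fibre $Y_{c}$ of the dual fibration, that is, a sheaf on a non-singular fibre with the prescribed Mukai vector $v=(r,e,s)$. Conversely, starting from a single fibrewise universal sheaf on some non-singular $X_{c}$ with Mukai vector $v$, the flatness and properness of $p$ together with the existence of the relative moduli space let one spread it out over a neighbourhood in $C$ and then glue, recovering a relative universal family and hence a fine component. This reciprocity is the sense in which the two statements are equivalent.

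The main obstacle is the passage from a twisted, or merely quasi-universal, family to a genuine universal sheaf: a priori the relative moduli space only carries a universal family twisted by a Brauer class $\alpha\in\mathrm{Br}(Y)$, and fineness requires that $\alpha$ vanish. I would control this through the Mukai vector. Since $v$ satisfies $v^{2}=0$ and is primitive, and the Mukai lattice $\tilde{\h}(X,\mathbb{Z})$ is unimodular, the homomorphism $w\mapsto\langle v,w\rangle$ is surjective onto $\mathbb{Z}$, so there is an integral class $w$ with $\langle v,w\rangle=1$; concretely this amounts to $\gcd(r,\,e\cdot H,\,s)=1$. Pairing $\mathcal{E}$ against a sheaf realizing $w$ produces a line bundle that trivializes the twist, so the Brauer obstruction dies and $\mathcal{E}$ exists as an honest object of $D^{b}(X\times Y)$. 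The one point where the hypotheses of Theorem~\ref{theo1} must be invoked with care is the verification that the fixed polarization $l$ is $v$-generic, so that Gieseker semistability coincides with stability on the relevant fibre and $Y$ is smooth and genuinely fine rather than only coarse.
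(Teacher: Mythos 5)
Your overall architecture matches the paper's (Theorem~\ref{theo1} produces the component; the discussion in Section~4 identifies a point of it with a stable sheaf $\mathcal{E}$ on a non-singular fibre and asserts that the component $Y(\mathcal{E})$ is fine and projective), and you are right to isolate the Brauer obstruction as the real content of the word \emph{fine}, since the proof of Theorem~\ref{theo1} only delivers an $\alpha$-twisted universal sheaf and an equivalence $D^{b}(M,\alpha)\cong D^{b}(X)$. But your mechanism for killing $\alpha$ does not work as stated. Unimodularity of the full Mukai lattice $\tilde{\h}(X,\mathbb{Z})$ does give, for primitive $v$, a class $w$ with $\langle v,w\rangle=1$; it does not give an \emph{algebraic} such $w$, i.e.\ one lying in $\h^{0}\oplus\NS(X_{t})\oplus\h^{4}$ and hence realizable by an actual object of $D^{b}(X_{t})$. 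Only an algebraic $w$ can be ``paired against $\mathcal{E}$'' to trivialize the twist. The correct criterion is that $M(v)$ is fine if and only if $\gcd(r,\,e\cdot D,\,s)=1$ as $D$ ranges over $\NS(X_{t})$, and this condition genuinely fails for many primitive isotropic $v$ --- that failure is precisely the subject of C\u{a}ld\u{a}raru's paper \cite{Cal} on \emph{non-fine} moduli spaces, which the paper itself cites. So the sentence ``the homomorphism $w\mapsto\langle v,w\rangle$ is surjective \dots\ so the Brauer obstruction dies'' is a non sequitur, and with it the claimed equivalence ``any stable sheaf on a non-singular fibre with the fixed Mukai vector yields a fine component'' also breaks: the component containing such a sheaf need not be fine.

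The gap is repairable, and the repair is what the paper implicitly does: one must \emph{choose} the numerical invariants rather than derive fineness from lattice theory. The paper normalizes $\langle e,e\rangle=2d=2rs$ with $\gcd(r,s)=1$, and over the locus $C^{1}$ where $\NS(X_{t})=\mathbb{Z}H_{t}$ one then has $\gcd(r,\,e\cdot H_{t},\,s)=\gcd(r,2rs,s)=\gcd(r,s)=1$, so the algebraic class $w$ you need does exist and the fibrewise moduli space $\mathcal{M}(r_{t},e,s_{t})$ is fine; the relative statement is then the Bridgeland--Maciocia argument \cite{BM} you quote. If you restate your third paragraph as ``for $v=(r,e,s)$ with $\gcd(r,s)=1$ and $\mathrm{Pic}(X_{t})=\mathbb{Z}H_{t}$ the gcd criterion holds, hence $\alpha=0$'' instead of appealing to unimodularity, your argument closes and coincides in substance with the paper's. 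Your first paragraph should also be adjusted accordingly: Theorem~\ref{theo1} by itself does not yet assert that its component is fine, only twisted-fine, so the corollary is not a formal consequence of it without this numerical input.
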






\section{Derived categories of split-type Calabi-Yau manifolds}
There are two main mathematical conjectures in Mirror Symmetry,
Kontsevich homological mirror symmetry conjecture and the conjecture
of Strominger, Yau and Zaslow, which predicts the structure of a CY
manifold and how to get the mirror of a given CY manifold. 
Recall that by a Calabi-Yau manifold we mean a compact K\"ahler manifold $X$ with trivial canonical bundle $K_{X}$. Many examples of Calabi-Yau manifolds can be constructed by considering fibrations of lower dimensional varieties, that is, elliptic or $K3$ fibrations. These are the so called split-type Calabi-Yau manifolds.


A $K3$ surface is a compact complex surface $X$ which is connected and simply connected and has trivial canonical bundle $K_{X}$, i.e., $X$ has a unique (up to constant) nowhere vanishing holomorphic 2-form $w_{X}$. The notion of $K3$ surface is invariant under deformation, i.e., any deformation of a $K3$ surface is a $K3$ surface. Moreover any two $K3$ surfaces are deformations of each other. Hence the lattice $\h^{2}(X;\mathbb{Z})$ with the cup bilinear form 
$$\langle , \rangle:\ \h^{2}(X,\mathbb{Z})\times \h^{2}(X,\mathbb{Z})\rightarrow \mathbb{Z}, \ \ even\ for \ all \
\alpha\ \in \h^{2}(X,\mathbb{Z}),$$
is the same for all $K3$ surfaces $X$ and can be called the $K3$ lattice. Let $e\in \h^{1,1}(X,\mathbf{C})\cap \h^ {2}(X,\mathbf{Z})$ be the class of an ample divisor. Then $(X,e)$ is a polarized $K3$ surface.
The degree of the polarization is an integer $2d$, such that the scalar product $<e,e>=2d=2rs$ where $d, r, s$
are any positive integers and their greatest common divisor $(r,s)$ is 1.

\begin{defi} Two K3 surfaces $X, Y$ are said to be FM partners, if there is an equivalence  $D(X)\cong D(Y)$ of their bounded derived categories of coherent sheaves. The set of isomorphism classes of FM partners of $X$ is denoted by $\FM(X)$.
\end{defi}



\subsubsection*{The homological mirror symmetry  Conjecture.}
Homological mirror symmetry conjecture due to Kontsevich,  asserts that there should be an equivalence of categories behind mirror
duality, one category being the derived category of coherent sheaves on a Calabi-Yau
manifold $D(X)$ and the other one being the Fukaya category $DFuk(\hat{X})$ of the mirror Calabi-Yau manifold.

Let $\pi: Y\rightarrow S$ be a fibration by $K3$ surfaces with a relative polarization.
This means that on $Y$ we have a polarization class $H$ such that its restriction to each fibre  
$H|_{X_ {t}}=e$ is the polarization class of the corresponding fibre.
We can assume that the fibration is Calabi-Yau. Since  the singular fibers are normal crossing divisors,  and the total space and the base are projective varieties the fibration morphism is automatically proper. We are assuming that the fibers are equidimensional and therefore the morphism is flat.  By the theorem of U. Person and H. Pinkham \cite{PP}, there exists a birational map $\varphi: X\rightarrow X'$  where $X'$ has trivial canonical bundle and it is an isomorphism over the smooth locus such that the following diagram is commutative:

 $$ \xymatrix{ X  \ar[rr]^{\varphi}  \ar[rd]^{\pi} &  & X'\ar[ld] _{\pi '}    \\
&  B & } $$

Now, by Bridgeland theorem (see \cite{Bri}), two birational  3-folds have equivalent derived categories.

We consider the moduli problem of the dual fibration, that is, the dual fibration as the stack representing the Picard functor, that is, the moduli functor of semistable
sheaves on the fibres that contains line bundles of degree 0 on
smooth fibres. The corresponding coarse moduli space is not a fine
moduli space due to the presence of singular fibres. Let us call
$Y^{\vee}$ the dual fibration when it exists and satisfying the
property  that over the smooth locus the fibres correspond to the
dual $K3$ surfaces of the original fibration.

\section{Proof of Theorem 1.1}


 Let $\Sigma(p) \hookrightarrow C$ be the discriminant locus of
$p$, that is, the closed subvariety 
corresponding to the singular fibres.  From Hironaka's theorem on the resolution of singularities, we may assume that the singular fibers are normal crossing divisors. Thus the fibration morphism is automatically proper and flat.

For every $t \in C-\Sigma(p) $, consider the $K3$ surface $X_{t}$ and its corresponding Mukai vector $(r_{t},e, s_{t})$, where  $2r_{t}s_{t}=(H_{t})^{2}=H^{2}=2d$.
By Mukai's Theorem (see \cite{Muk}), we may associate to $X_{t}$ a 2-dimensional moduli space $\mathcal{M}(r_{t},e,s_{t})$ which is
a $K3$ surface as well with the same derived category to $X$, thus it is 
 a FM partner. 
 We observe that although the degree of the polarization is constant in $t$, the rank of the fibres can jump for some $t \in C$. However the condition of the Picard rank being one is open in the Zariski topology and it determines an open set
 $$C^{1}:=\{t\in C|\,\, \NS(X_{t})=\mathbb{Z}H_{t}\}.$$

Now if $s\in C^{1}$, then $H|_{X_{t}}=H_{t}=l$ is an ample divisor and since the number of Mukai partners depends on the prime decomposition
 $l^{2}=2d=2p_{1}^{e_{1}}\ldots p^{e_{m}}_{m}$, where $k\geq 0$, $e_{i}\geq 1$ and $p_{i}$ primes with $p_{i}\neq p_{j}$, if $i\neq j$,  there is a description of the FM partners of the $K3$ surface  in terms of the Mukai vectors of the moduli spaces associated (see \cite{St}). We need to single out a unique Mukai dual $K3$ surface. For example, the reflected Mukai vectors $(r_{t},e, s_{t})$ and $(s_{t},e,r_{t})$ give isomorphic moduli spaces $\mathcal{M}(r_{t},e,s_{t})\cong \mathcal{M}(s_{t},e,r_{t})$ even if the original $K3$ surfaces are not isomorphic. Thus, this choice gives rise to  different dual fibrations.
 
 If the rank of the Neron Severi group $\NS(X_{t})$ is bigger than 12, according to Morrison \cite{Mo},  there exists a torsion free semistable bundle on $X_{t}$, and the choice of dual $K3$ surface is unique in this case.
 
 Consider  the product  $X_ {t} \times \hat{X}_{t}$  of the corresponding $K3$ surface $X_{t}$ with its Mukai dual.
 Then we consider the universal family $\mathcal{P}_{t}$ over the product $X_{t}\times \hat{X}_{t}$. 
Proceeding as in Proposition 2.4 of \cite{Ma}, extending the family
$\mathcal{P}:=\{\mathcal{P}_{t}: t \in B\}$ over the non singular
locus by Deligne theorem (\cite{Del}), the class of the polarization
is invariant by the action of the monodromy group of the singular
fibres, thus $\mathcal{P}$ extends to an object $\mathcal{F}$ over
the whole fibration.

The family does not need to be universal, but according to
C\v{a}ld\v{a}raru (see \cite{Cal}), a quasi-universal or twisted
universal family sheaf always exists and thus the dual fibration
$(X/C)^{\vee}$  is the coarse moduli space induced by $\mathcal{F}$.
The fibration constructed thus far,  is a connected component $M$ of
the relative moduli space $\mathcal{M}^{l}(X/C)$ of stable sheaves
on $p$ with respect to the polarization, (Prop. 3.4. of \cite{BM}).
There exists a unique $\alpha$ in the Brauer group $Br(M)$ of $M$
with the property that an $p^{*}_{M}\alpha^{-1}$ twisted universal
sheaf exists on $X\times M$, where $p_{M}$ is the projection map
from $X\times M$ to M, and it is the obstruction to the existence of
a universal sheaf on $X\times M$. This twisted universal sheaf
yields an equivalence (Theorem 1.2 of \cite{Cal}).
$$D^{b}(M,\alpha)\cong D^{b}(X).$$
So both fibrations are derived equivalent.
$\square$
\section{Remarks and conclusions}
There exists at least one  fine component of the relative moduli space or equivalently a sheaf $\mathcal{E}$
on a non singular fiber with fixed Mukai vector. 
A closed point of a relative moduli space corresponds to a sheaf
$\mathcal{E}$ on a fibre (not to a sheaf on the whole fibration).
Let $X_{s}$ be a $K3$ surface or an abelian surface. The tangent
space at that point to the moduli space of sheaves $M(X/S)$ on the
fibration, can be identified with
$$T_{M}(\mathcal{E})\cong \ex^{1}_{S}(\mathcal{E},\mathcal{E}).$$

If $\ex^{2}_{S}(\mathcal{E},\mathcal{E})=0$, then $M$ is smooth at
$\mathcal{E}$. There are bounds (Corollary 4.5.2 of
\cite{HL}), $$\ex^{1}(\mathcal{E},\mathcal{E})\geq \dim_{[\mathcal{E}]}M \geq
\ex^{1}(\mathcal{E},\mathcal{E})-\ex^{2}(\mathcal{E},\mathcal{E}).$$ 

In general to construct such components $Y$ of the relative moduli space,
we assume that there exists a divisor $L$ on $X$ and integer numbers $r, s>0$,
such that there exists a sheaf $\mathcal{E}$ on a non singular fiber $X_ {t}$
which is stable with respect to $H_ {t}$ and $s=ch_ {2}(\mathcal{E})+r$.
The component $Y(\mathcal{E})$ containing the class of the sheaf $\mathcal{E}$ is a fine projective moduli space
and the fibration $q:Y\rightarrow B$ is equidimensional. 
Thus there is a universal family on the product $Y \times
Y(\mathcal{E})$ that gives the equivalence of the derived categories
of both fibrations over $B$.

\begin{prop} \label{prop1} Every fine projective component $Y$
of the relative moduli space $\mathcal{M}^{e}(X/B)$ of stable
sheaves with respect to a fixed polarization $e$ is derived
equivalent to the original Calabi-Yau fibration $(X/B)$ and
therefore are derived equivalent between them. Conversely, any
projective variety derived equivalent to the original fibration is a
component of the relative moduli space.
\end{prop}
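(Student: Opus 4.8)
The plan is to treat the two implications separately, using universal families to produce Fourier--Mukai kernels in each direction.

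For the forward implication, recall that by the definition recalled after Theorem~\ref{theo1} a fine component $Y \subset \mathcal{M}^{e}(X/B)$ is projective and carries a genuine (untwisted) universal family $\mathcal{U} \in D^{b}(X \times_{B} Y)$. First I would verify that the integral functor $\Phi_{\mathcal{U}} \colon D^{b}(Y) \to D^{b}(X)$ with kernel $\mathcal{U}$ is an equivalence. This is the untwisted counterpart (Brauer class $\alpha = 0$) of the equivalence $D^{b}(M,\alpha) \cong D^{b}(X)$ produced in the proof of Theorem~\ref{theo1}: fibrewise over $B - \Sigma(p)$ it is Mukai's equivalence between a K3 surface and a fine two-dimensional moduli space of stable sheaves on it, and the relative statement follows by extending the fibrewise kernels across the normal-crossing singular fibres exactly as in the monodromy-invariance argument of that proof. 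Once each fine component $Y$ is derived equivalent to $(X/B)$, transitivity gives that any two fine components $Y_{1}, Y_{2}$ satisfy $D^{b}(Y_{1}) \cong D^{b}(X) \cong D^{b}(Y_{2})$, so they are derived equivalent to one another.

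For the converse, let $Z$ be a projective variety with a relative derived equivalence $\Phi \colon D^{b}(Z) \to D^{b}(X/B)$. By Orlov's representability theorem in its relative form over $B$, I may assume $\Phi = \Phi_{\mathcal{P}}$ is a Fourier--Mukai transform with kernel $\mathcal{P} \in D^{b}(Z \times_{B} X)$. The key idea is to apply $\Phi$ to the structure sheaves of closed points. For a point $z \in Z$ lying over a smooth fibre, $\Phi(\mathcal{O}_{z})$ is a point-like object, hence, after a shift, a stable sheaf $\mathcal{E}_{z}$ supported on the single fibre $X_{t}$ with the prescribed Mukai vector $v$; this is the relative analogue of the Bridgeland--Maciocia analysis of which objects a Fourier--Mukai equivalence sends to sheaves. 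The assignment $z \mapsto [\mathcal{E}_{z}]$ then defines a classifying morphism $\psi \colon Z \to \mathcal{M}^{e}(X/B)$. I would show $\psi$ is injective on closed points, because $\Phi$ is fully faithful and skyscrapers at distinct points are mutually orthogonal, and that it induces isomorphisms on tangent spaces: the deformation space of $\mathcal{E}_{z}$ is $\ex^{1}_{S}(\mathcal{E}_{z}, \mathcal{E}_{z})$, which $\Phi$ identifies with $\ex^{1}(\mathcal{O}_{z}, \mathcal{O}_{z}) \cong T_{z}Z$. A morphism injective on points and an isomorphism on tangent spaces is an open immersion; since $Z$ is projective its image is also closed, so $\psi$ identifies $Z$ with a connected --- and hence irreducible --- component of $\mathcal{M}^{e}(X/B)$, as required.

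The main obstacle is the converse direction, and specifically the step asserting that $\Phi(\mathcal{O}_{z})$ is a shift of an honest sheaf supported on a single fibre. Over the smooth locus $B - \Sigma(p)$ this follows from the fibrewise K3 theory, but near the degenerate normal-crossing fibres one must control the WIT-type conditions for the relative kernel $\mathcal{P}$ and rule out that point-objects spread across several components of a singular fibre. I would handle this by first establishing the equivalence and the classifying map over $B - \Sigma(p)$, where everything is governed by Mukai's theorem, and then extending over $\Sigma(p)$ using properness of $\mathcal{M}^{e}(X/B)$ together with the monodromy-invariance of the Mukai vector already exploited in the proof of Theorem~\ref{theo1}; the image $\psi(Z)$ is then closed by properness of $Z$ and open by the tangent-space comparison, hence a full component.
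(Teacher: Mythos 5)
Your overall skeleton matches the paper's: a universal family supplies the Fourier--Mukai kernel for the forward implication, transitivity gives the equivalence between components, and Orlov's representability theorem handles the converse. The differences are in how much is actually proved. For the forward direction the paper simply cites Bridgeland--Maciocia \cite{BM} for the statement that $Y$ is a non-singular projective $K3$ fibration and that the integral functor with kernel the universal sheaf $\mathcal{P}$ is an equivalence; you instead propose to re-derive this by Mukai's fibrewise equivalence over $B-\Sigma(p)$ followed by the monodromy-extension argument across the normal-crossing fibres. That is more self-contained but also re-proves something the paper is content to quote, and the extension step is exactly where the technical work of \cite{BM} lives, so you should not present it as routine. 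For the converse the paper's argument is a single sentence --- Orlov gives a kernel $\mathcal{E}\in D^{b}(X\times_{B}Y)$ and ``by Theorem \ref{theo1} this defines a fine component'' --- which does not actually follow, since Theorem \ref{theo1} constructs one particular component rather than classifying all derived partners. Your argument (apply $\Phi$ to skyscrapers, show the images are shifts of stable fibre-supported sheaves, build the classifying morphism $\psi\colon Z\to\mathcal{M}^{e}(X/B)$, and conclude it is an open immersion with closed image) is the standard and essentially correct way to close that gap, and you rightly flag the delicate point, namely controlling point-objects over the singular fibres.

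One caveat applies to both your write-up and the paper's: the converse as stated cannot hold for an arbitrary derived equivalence $D^{b}(Z)\cong D^{b}(X)$; one needs the equivalence to be relative over $B$ (kernel supported on $Z\times_{B}X$, or equivalently $B$-linearity), and neither you nor the paper justifies passing from a bare equivalence to a relative one. You at least make the assumption explicit by invoking Orlov ``in its relative form over $B$,'' but that hypothesis should be added to the statement rather than smuggled into the proof.
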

\begin{proof}[Proof.]
By Corollary \ref{finecomp} we can consider  components $Y$ of the relative moduli space $\mathcal{M}^{e}(X/B)$ of stable
sheaves 
on the fibers of  the CY fibration $(X/B)$,  stable with respect to
the polarization $e$.  It is a fine moduli space, so there is a
universal sheaf $\mathcal{P}$ over the product $X\times Y$.
Bridgeland and Maciocia proved in \cite{BM} that $Y$ is a
non-singular projective variety, $\widehat{p}:Y\rightarrow B$ is a
$K3$ fibration and the integral functor $D^{b}(Y)\rightarrow
D^{b}(X)$ with kernel $\mathcal{P}$  is an equivalence of derived
categories, that is, a Fourier-Mukai transform. It is Calabi-Yau
because one has $D^{b}(X)\cong D^{b}(Y)$.

Now, we start with an equivalence $D^{b}(Y)\cong D^{b}(X)$, then by
a result of Orlov \cite{Or1}, it is given by an object
$\mathcal{E}\in D^{b}(X\times_{B}Y)$ which satisfies a Calabi-Yau
condition and thus by Theorem \ref{theo1} this defines a fine
component of the relative moduli space. All the equivalences of the
original fibration are obtained in this way.
\end{proof}


\end{document}